\newcommand{\R}{\mathbb{R}}
\numberwithin{equation}{section}
\newtheorem{theorem}{Theorem}[section]
\newtheorem{lemma}{Lemma}[section]
\newtheorem{proposition}{Proposition}[section]
\newtheorem{definition}{Definition}[section]
\newtheorem{corollary}{Corollary}[section]
\title[Perturbation of a nonautonomous problem in $\R^n$]{Perturbation of a nonautonomous problem in $\R^n$}
\date{\today}
\author[E. Capelato]{Erika Capelato}
\address[E. Capelato]{Faculdade de Ciências e Letras, UNESP - Univ Estadual Paulista, Departamento de Economia, 14800-901 Araraquara SP, Brazil}
\email{erikacap@yahoo.com.br}
\author[K. Schiabel-Silva]{Karina Schiabel-Silva}
\address[K. Schiabel-Silva]{Departamento de Matem\'atica,  Universidade Federal de S\~ao Carlos,  13565-905 S\~ao Carlos SP, Brazil}
\email{schiabel@dm.ufscar.br}
\author[R. P. Silva]{Ricardo P. Silva}
\address[R. P. Silva]{Instituto de Geoci\^{e}ncias e Ci\^{e}ncias Exatas, UNESP - Univ Estadual Paulista, Departamento de Matemática, 13506-900 Rio Claro SP, Brazil}
\email{rpsilva@rc.unesp.br}
\begin{document}

\maketitle

\begin{abstract}
In this paper we prove a stability result about the asymptotic dynamics of a perturbed nonautonomous evolution equation in $\R^n$ governed by a maximal monotone operator.
\end{abstract}

$\quad$

{\footnotesize \subjclass{{ Mathematics Subject Classifications: 35B41, 35B40}

\keywords{Keywords: non-autonomous systems, p-laplacian, pullback attractors, upper semicontinuity of attractors}}  }

\section{Introduction}\label{sec:introd}

The investigation of the asymptotic behavior of nonlinear equations with some dissipation property and subjected to perturbations on parameters has been matter of extensive studies of several different frameworks. The goal is to understand how the variation of parameters in the models can determine the evolution of their states. For a recent works in this topic we refer the reader to  \cite{Silva:11a, Carbone:11, Silva:11,Carabalho:10, Simsen:09a} and references given there.

In this paper we analyze, from the point of view of pullback attractors theory \cite{TLR, ChV}, the asymptotic behavior of the nonlinear nonautonomous problem 
\begin{equation}\label{eq:p-lap}
\begin{gathered}
u_{t} - {\rm div}(|\nabla u|^{p-2} \nabla u) + a_\epsilon( x ) | u |^{p-2} u = B(t,u) \quad   \text{ in }   \R^n \\
u(\tau)= u_\tau  \in L^2(\R^n),
\end{gathered}
\end{equation}
with $2 < p < n$, and we also derive some stability properties with respect to small variations of the functions $a_\epsilon \in C(\R^n, \R)$. We assume that $a_\epsilon(x) \geq 1$, $\epsilon \in [0,1]$, and  $ \| a_\epsilon - a_0 \|_{L^\infty(\R^n)} \stackrel{\epsilon \to 0}{\longrightarrow} 0$. In addition, we require that $a_0$ satisfy
\begin{equation}\label{eq:a0}
\int_{\R^n} \frac{1}{a_0 (x)^{\frac{2}{p-2}}}  dx < \infty.
\end{equation}
Notice that $a_\epsilon$ also satisfies \eqref{eq:a0} for $\epsilon \in [0, \epsilon_0]$, for some $\epsilon_0 >0$.

Concerning to the nonlinearity, we suppose that $B: \R \times L^2(\R^n)\to L^2(\R^n)$ satisfies both conditions below:%
\begin{enumerate}
\item[{\rm (i)}] there exists a constant $L > 0$ such that
$$
\| B(t,u_1) - B(t,u_2) \|_{L^2(\R^n)} \leq L \|u_1 - u_2 \|_{L^2(\R^n)}, \quad  \forall \; t \in  \R, \forall \;  u_1, u_2 \in L^2(\R^n);
$$
\item[{\rm (ii)}]  the map $L_1:\R \ni t \mapsto \|B(t,0) \|_{L^2(\R^n)} \in \R$ is nondecreasing, absolutely continuous and bounded on compact subsets of $\R$. 
\end{enumerate}

Under these assumptions we state the main result of this paper:

\begin{theorem}
For each value of the parameter $\epsilon \in [0, \epsilon_0]$, the equation \eqref{eq:p-lap} generates a nonlinear compact process, $\{S_\epsilon(t,\tau): t\geq \tau \in \R \}$, in the space $L^2(\R^n)$, which has a family of pullback attractors $\{{\mathcal A_\epsilon}(t):t\in \mathbb{R}\}$. Moreover, this family of pullback attractors is upper-semicontinuous in $\epsilon = 0$.
\end{theorem}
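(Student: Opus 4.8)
The plan is to establish the statement in three stages: generation of the process and its compactness; existence of the pullback attractors from dissipativity plus compactness via the abstract theory of \cite{TLR,ChV}; and finally upper semicontinuity at $\epsilon=0$ by verifying the two standard ingredients (a uniform-in-$\epsilon$ pullback dissipativity/compactness and convergence of the processes). For the first stage I would note that $A_\epsilon u=-\mathrm{div}(|\nabla u|^{p-2}\nabla u)+a_\epsilon(x)|u|^{p-2}u$ is the subdifferential in $L^2(\R^n)$ of the proper, convex, lower semicontinuous functional
\[
\varphi_\epsilon(u)=\frac1p\int_{\R^n}|\nabla u|^p\,dx+\frac1p\int_{\R^n}a_\epsilon(x)|u|^p\,dx,
\]
with effective domain the reflexive Banach space $X_\epsilon=\{u\in W^{1,p}(\R^n):\int_{\R^n}a_\epsilon|u|^p\,dx<\infty\}$; since $C_c^\infty(\R^n)\subset X_\epsilon$, we have $\overline{X_\epsilon}^{\,L^2}=L^2(\R^n)$. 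As $B(t,\cdot)$ is globally Lipschitz with $t\mapsto\|B(t,0)\|_{L^2}$ locally bounded, the abstract theory of evolution equations governed by a subdifferential perturbed by a Lipschitz term gives, for each $u_\tau\in L^2(\R^n)$, a unique global strong solution of \eqref{eq:p-lap}, so $S_\epsilon(t,\tau)u_\tau:=u(t)$ defines a process on $L^2(\R^n)$, and the estimate $\|S_\epsilon(t,\tau)u_\tau-S_\epsilon(t,\tau)v_\tau\|_{L^2}\le e^{L(t-\tau)}\|u_\tau-v_\tau\|_{L^2}$ yields its continuity.

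The crucial point, which I would isolate as a lemma, is that $X_\epsilon\hookrightarrow L^2(\R^n)$ compactly, uniformly for $\epsilon\in[0,\epsilon_0]$: H\"older's inequality with exponents $p/2$ and $p/(p-2)$ gives $\int_{|x|>R}|u|^2\le\big(\int_{\R^n}a_\epsilon|u|^p\big)^{2/p}\big(\int_{|x|>R}a_\epsilon^{-2/(p-2)}\big)^{(p-2)/p}$, so by \eqref{eq:a0} bounded subsets of $X_\epsilon$ are uniformly tight; combined with the compactness of $W^{1,p}(B_R)\hookrightarrow L^2(B_R)$ (valid since $p<n$ and $p>2$ give $np/(n-p)>2$) this yields the compact embedding, and since $a_\epsilon\ge a_0/2$ once $\|a_\epsilon-a_0\|_{L^\infty}\le1/2$, dominated convergence makes the constant $\big(\int a_\epsilon^{-2/(p-2)}\big)^{(p-2)/p}$ converge to $\big(\int a_0^{-2/(p-2)}\big)^{(p-2)/p}$, which furnishes the uniformity. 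Pairing \eqref{eq:p-lap} with $u$, using $a_\epsilon\ge1$ together with the resulting bound $\|u\|_{L^2}^p\le C_\epsilon\int a_\epsilon|u|^p$, gives
\[
\tfrac12\tfrac{d}{dt}\|u\|_{L^2}^2+c_\epsilon\|u\|_{L^2}^p+\|\nabla u\|_{L^p}^p+\|u\|_{L^p}^p\le L\|u\|_{L^2}^2+L_1(t)\|u\|_{L^2},
\]
and since $p>2$ the superlinear term dominates the right-hand side for $t$ in any half-line $(-\infty,T]$ (on which $L_1\le L_1(T)$), producing a pullback absorbing family of $L^2$-balls whose radii are uniform in $\epsilon$ because $c_\epsilon,C_\epsilon$ depend continuously on $\int a_\epsilon^{-2/(p-2)}$. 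Integrating the same identity bounds $\int_\tau^t\varphi_\epsilon(u(s))\,ds$, and pairing next with $(s-\tau)u'(s)$ together with Young's inequality yields the regularizing estimate $\varphi_\epsilon(u(t))\le\frac{C}{t-\tau}\,(\text{quantities bounded uniformly in }\epsilon)$ for $t>\tau$; hence $S_\epsilon(t,\tau)$ maps $L^2$-bounded sets into $X_\epsilon$-bounded sets for $t>\tau$, so by the compact embedding $\{S_\epsilon(t,\tau)\}$ is a compact process. Dissipativity and compactness then give, via \cite{TLR,ChV}, the family of pullback attractors $\{\mathcal A_\epsilon(t)\}$, and the regularizing estimate applied to the globally bounded complete trajectories in the attractors shows that, for each $t$, $\bigcup_{\epsilon\in[0,\epsilon_0]}\mathcal A_\epsilon(t)$ is bounded in $X_\epsilon$ uniformly in $\epsilon$, hence precompact in $L^2(\R^n)$.

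For upper semicontinuity, take solutions $u_\epsilon,u_0$ of \eqref{eq:p-lap} with parameters $\epsilon$ and $0$ and initial data $u_\tau^\epsilon\to u_\tau^0$ in $L^2(\R^n)$, bounded in $X_\epsilon$ (the relevant case, since the attractors are $X_\epsilon$-bounded, whence $\varphi_\epsilon(u_\epsilon(\cdot))$ and $\varphi_0(u_0(\cdot))$ stay bounded on any $[\tau,T]$ with no blow-up at $\tau$). Setting $w=u_\epsilon-u_0$ and pairing the difference of the equations with $w$, the monotonicity of $\xi\mapsto|\xi|^{p-2}\xi$ annihilates the $p$-Laplacian term and makes $\int a_\epsilon(|u_\epsilon|^{p-2}u_\epsilon-|u_0|^{p-2}u_0)w\ge0$, leaving
\[
\tfrac12\tfrac{d}{dt}\|w\|_{L^2}^2\le L\|w\|_{L^2}^2+\|a_\epsilon-a_0\|_{L^\infty}\int_{\R^n}|u_0|^{p-1}|w|\,dx;
\]
since $u_0$ and $w$ stay bounded in $W^{1,p}(\R^n)\hookrightarrow L^p(\R^n)$ on $[\tau,T]$, the last integral is bounded, and Gronwall gives $\sup_{[\tau,T]}\|w(t)\|_{L^2}^2\le e^{2L(T-\tau)}\big(\|u_\tau^\epsilon-u_\tau^0\|_{L^2}^2+C\|a_\epsilon-a_0\|_{L^\infty}\big)\to0$, i.e. $S_\epsilon(t,\tau)x_\epsilon\to S_0(t,\tau)x_0$ whenever $x_\epsilon\to x_0$ within an $X_\epsilon$-bounded set. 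Then the standard argument closes the proof: if $\mathrm{dist}_{L^2}(\mathcal A_{\epsilon_k}(t),\mathcal A_0(t))\ge\delta>0$ along some $\epsilon_k\to0$, pick $a_k\in\mathcal A_{\epsilon_k}(t)$ realizing the gap; by precompactness $a_k\to a$ up to a subsequence; by pullback invariance $a_k=S_{\epsilon_k}(t,\tau)b_k$ with $b_k\in\mathcal A_{\epsilon_k}(\tau)$, precompact in $L^2$ and $X_{\epsilon_k}$-bounded, so $b_k\to b_\tau$ along a further subsequence, whence $a=S_0(t,\tau)b_\tau$ by the convergence of processes; as $\tau\to-\infty$, $b_\tau$ ranges in the uniform pullback absorbing family, so $S_0(t,\tau)b_\tau$ approaches $\mathcal A_0(t)$, forcing $a\in\mathcal A_0(t)$ and contradicting $\mathrm{dist}_{L^2}(a,\mathcal A_0(t))\ge\delta$.

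I expect the main obstacle to be keeping every estimate in the middle stage genuinely uniform in $\epsilon$ on the unbounded domain $\R^n$—so that neither the absorbing radii nor the $X_\epsilon$-bounds coming from the regularizing effect deteriorate as $\epsilon\to0$—together with the closely related task of bounding $\|a_\epsilon-a_0\|_{L^\infty}\int_{\R^n}|u_0|^{p-1}|w|$ by quantities independent of $\epsilon$; the uniform compact embedding lemma and the continuity in $\epsilon$ of the weight constants $c_\epsilon,C_\epsilon$ are precisely what make this possible.
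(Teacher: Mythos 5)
Your proposal is correct and follows essentially the same route as the paper: the weighted space $E_\epsilon$ with the uniform-in-$\epsilon$ compact embedding into $L^2(\R^n)$ via H\"older and the tail condition \eqref{eq:a0}, generation of the process by subdifferential theory with the Lipschitz perturbation, uniform $L^2$ and $E_\epsilon$ dissipative estimates giving compact pullback attracting sets, and convergence of the processes via monotonicity plus Gronwall to conclude upper semicontinuity. The only (harmless) variations are technical: you obtain the $E_\epsilon$ smoothing bound by the $(t-\tau)u_t$ multiplier where the paper uses the uniform Gronwall lemma, and you close the upper-semicontinuity step by a sequential compactness/contradiction argument (needing precompactness of $\bigcup_\epsilon\mathcal{A}_\epsilon(t)$), whereas the paper uses the direct triangle-inequality estimate which only needs boundedness of $\bigcup_\epsilon\mathcal{A}_\epsilon(\tau)$.
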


\section{Functional Framework}\label{sec:prel}

One of the difficulties on the analysis of the asymptotic behavior of PDE's in unbounded domains, is  the lack of compactness of embeddings of some functional spaces. To overcome this obstacle, some authors have been introduced weighted Sobolev spaces, see for instance \cite{Alves:96}. In this work will use the family of auxiliary spaces
$$
E_\epsilon = \{ u \in W^{1,p}(\R^n): \displaystyle \int_{\R^n} a_\epsilon  |u|^p \, dx  < \infty \}.
$$

Next lemma is a parameter dependent adaptation of a similar result in \cite{Alves:96, Simsen:09} which we prove here for reader's convenience.

\begin{lemma}\label{lemma:E}
The space $E_\epsilon $ endowed with norm $\| u \|_{E_\epsilon } = \left[\displaystyle \int_{\R^n} \left( |\nabla u|^p + a_\epsilon |u|^p  \right) \, dx \right]^\frac{1}{p} $ is a reflexive Banach space. Furthermore $E_\epsilon \stackrel{d}{\hookrightarrow} L^r(\R^n)$, $2 \leq r \leq p^*:= \frac{p n}{n-p}$, and $E_\epsilon \subset \subset L^r(\R^n)$, $2 \leq r < p^*$, with all embedding constants independent of $\epsilon \in [0, \epsilon_0]$.
\end{lemma}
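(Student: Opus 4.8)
The plan is to prove the four assertions in sequence, reducing everything to the elementary bound $a_\epsilon(x)\ge 1$, the Sobolev embedding of $W^{1,p}(\R^n)$, and the integrability hypothesis \eqref{eq:a0}, while tracking the dependence of every constant on $\epsilon$. That $\|\cdot\|_{E_\epsilon}$ is a norm follows from Minkowski's inequality in $L^p(\R^n)$, and the map $u\mapsto(\partial_1 u,\dots,\partial_n u,\,a_\epsilon^{1/p}u)$ is a linear isometry of $E_\epsilon$ into $L^p(\R^n)^{n+1}$ (with the $\ell^p$-product norm). Completeness of $E_\epsilon$ is obtained by noting that a Cauchy sequence in $E_\epsilon$ is Cauchy both in $W^{1,p}(\R^n)$ (since $a_\epsilon\ge 1$ forces $\|\cdot\|_{W^{1,p}}\le\|\cdot\|_{E_\epsilon}$) and in the weighted component $a_\epsilon^{1/p}u\in L^p(\R^n)$; the two limits are seen to be compatible along an a.e.-convergent subsequence, since $a_\epsilon$ is finite everywhere. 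Hence the image of the isometry is a closed subspace of $L^p(\R^n)^{n+1}$, and as the latter is reflexive for $1<p<\infty$ and closed subspaces of reflexive spaces are reflexive, $E_\epsilon$ is reflexive.

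For the continuous embeddings I would treat the endpoints $r=2$ and $r=p^*$ and interpolate. Since $a_\epsilon\ge 1$ gives $\|\nabla u\|_{L^p(\R^n)}\le\|u\|_{E_\epsilon}$, the Gagliardo--Nirenberg--Sobolev inequality yields $\|u\|_{L^{p^*}(\R^n)}\le C_S\|u\|_{E_\epsilon}$ with $C_S=C_S(n,p)$, in particular $\epsilon$-independent. For $r=2$, Hölder's inequality with exponents $\tfrac{p}{2}$ and $\tfrac{p}{p-2}$ applied to the identity $|u|^2=(a_\epsilon|u|^p)^{2/p}\,a_\epsilon^{-2/p}$ gives
$$
\|u\|_{L^2(\R^n)}^2\le\Big(\int_{\R^n}a_\epsilon|u|^p\,dx\Big)^{\!2/p}\Big(\int_{\R^n}a_\epsilon^{-\frac{2}{p-2}}\,dx\Big)^{\!\frac{p-2}{p}}\le C_0^{\frac{p-2}{p}}\,\|u\|_{E_\epsilon}^2 ,
$$
where $C_0:=\sup_{\epsilon\in[0,\epsilon_0]}\int_{\R^n}a_\epsilon^{-2/(p-2)}\,dx$. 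This supremum is finite: setting $\delta:=\sup_{\epsilon\in[0,\epsilon_0]}\|a_\epsilon-a_0\|_{L^\infty(\R^n)}$ (which we may assume $\le\tfrac12$ by shrinking $\epsilon_0$) one has, on $\{a_0\ge 2\delta\}$, $a_\epsilon\ge a_0-\delta\ge a_0/2$, and on $\{a_0<2\delta\}$, $a_\epsilon\ge 1$; hence $a_\epsilon^{-2/(p-2)}\le g:=2^{\frac{2}{p-2}}a_0^{-\frac{2}{p-2}}+\chi_{\{a_0<2\delta\}}$, and $g\in L^1(\R^n)$ because $\int a_0^{-2/(p-2)}<\infty$ by \eqref{eq:a0} and $|\{a_0<2\delta\}|<\infty$ by Chebyshev's inequality. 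For $2\le r\le p^*$, write $\tfrac1r=\tfrac\theta2+\tfrac{1-\theta}{p^*}$ with $\theta\in[0,1]$; the interpolation inequality $\|u\|_{L^r}\le\|u\|_{L^2}^\theta\|u\|_{L^{p^*}}^{1-\theta}$ then gives the continuous embedding $E_\epsilon\hookrightarrow L^r(\R^n)$ with constant depending only on $n,p,r,C_0$, hence $\epsilon$-independent. Density follows because $C_c^\infty(\R^n)\subset E_\epsilon$ ($a_\epsilon$ is bounded on compact sets) and $C_c^\infty(\R^n)$ is dense in $L^r(\R^n)$ for $r<\infty$.

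For the compact embedding, fix $2\le r<p^*$ and a sequence $(u_k)$ with $\|u_k\|_{E_\epsilon}\le M$; I would combine a uniform tail estimate with local compactness. Integrating the Hölder splitting above over $\R^n\setminus B_R$ gives $\|u_k\|_{L^2(\R^n\setminus B_R)}^2\le M^2\big(\int_{\R^n\setminus B_R}g\,dx\big)^{(p-2)/p}\to0$ as $R\to\infty$, uniformly in $k$ and in $\epsilon$ (the dominating function $g$ being $\epsilon$-independent and integrable); interpolating with $\|u_k\|_{L^{p^*}(\R^n\setminus B_R)}\le C_SM$ and using $\theta>0$ (valid since $r<p^*$) transfers this to $\sup_k\|u_k\|_{L^r(\R^n\setminus B_R)}=:\tau(R)\to0$. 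On each ball $B_R$ the sequence is bounded in $W^{1,p}(B_R)$, so Rellich--Kondrachov yields a subsequence convergent in $L^r(B_R)$; a diagonal extraction over $R\in\N$ produces one subsequence $(u_{k_j})$ converging in $L^r(B_R)$ for every $R$ to some $u$, which lies in $L^r(\R^n)$ with $\|u\|_{L^r(\R^n\setminus B_R)}\le\tau(R)$ by lower semicontinuity. Then $\|u_{k_j}-u\|_{L^r(\R^n)}\le\|u_{k_j}-u\|_{L^r(B_R)}+2\tau(R)$, and letting first $j\to\infty$ and then $R\to\infty$ gives $u_{k_j}\to u$ in $L^r(\R^n)$. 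All constants and all rates above are uniform over $\epsilon\in[0,\epsilon_0]$.

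The only genuinely delicate point is the compactness on the unbounded domain $\R^n$: there is no Rellich--Kondrachov theorem available globally, and the role of hypothesis \eqref{eq:a0} is precisely to force the mass of bounded sequences to vanish at infinity. The extra care compared with the single-operator case is to make this tail bound \emph{uniform in $\epsilon$}, which is why one dominates $a_\epsilon^{-2/(p-2)}$ by the fixed $L^1$ function $g$ instead of arguing with each $a_\epsilon$ separately; everything else is routine manipulation with Hölder's inequality, interpolation, the Sobolev inequality, and $a_\epsilon\ge 1$.
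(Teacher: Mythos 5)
Your proof is correct and follows essentially the same route as the paper: Hölder's inequality with the weight $a_\epsilon$ for the $L^2$ bound, the Sobolev embedding of $W^{1,p}(\R^n)$ at the endpoint $p^*$, interpolation for intermediate $r$, and local Rellich--Kondrachov compactness combined with the uniform tail estimate furnished by \eqref{eq:a0}. You in fact supply details the paper leaves implicit (completeness and reflexivity via a closed subspace of an $L^p$-product, and the uniform-in-$\epsilon$ domination of $a_\epsilon^{-2/(p-2)}$); the only slight imprecision is calling $u\mapsto(\nabla u,\,a_\epsilon^{1/p}u)$ an isometry for the $\ell^p$-product norm, whereas it is merely an isomorphism onto a closed subspace --- which is all that reflexivity requires.
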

\begin{proof}
Notice that $E_\epsilon$ is a reflexive Banach space by Eberlein-Smulian theorem. 

Now if $\theta= \frac{p}{2}$, then $\frac{\theta'}{\theta}= \frac{2}{p-2}$, where $\frac{1}{\theta} + \frac{1}{\theta'}=1$. Let be $u \in E_\epsilon$, by H\"older's Inequality, 
\begin{align*}
\| u \|_{L^2(\R^n)}^2 & \leq \left[ \int_{\R^n} \frac{1}{a_\epsilon (x)^{\frac{2}{p-2}}}  dx  \right]^\frac{1}{\theta'}  \left[ \int_{\R^n} a_\epsilon(x) |u(x)|^p dx  \right]^\frac{2}{p} \\
& \leq  \left[ \int_{\R^n} \frac{1}{a_0 (x)^{\frac{2}{p-2}}}  dx + 1 \right]^\frac{1}{\theta'}  \left[ \int_{\R^n} a_\epsilon(x) |u(x)|^p dx  \right]^\frac{2}{p} \leq c \| u \|_{E_\epsilon}^2.
\end{align*}
Furthermore, recalling that $a_\epsilon(x) \geq 1$, we also have that $E_\epsilon \hookrightarrow W^{1,p}(\R^n)$ with  embedding constant independent of $\epsilon$. Therefore the embedding (part of the statement) follows from Sobolev's embedding $ W^{1,p}(\R^n) \hookrightarrow L^r(\R^n)$, $p \leq r \leq p^*$, and interpolation's inequality.

In order to prove the compactness part let us to consider a sequence $u_k \rightharpoonup 0$ in $E_\epsilon$. Since $W^{1,p}(B(0,R))\subset \subset L^2(B(0,R))$, for any $R>0$, taking subsequences if necessary, we can assume that $u_k \to 0$ in $L^2(B(0,R))$. By \eqref{eq:a0}, for each $\delta >0$, there exists $R=R(\delta)>0$ such that 
$$ 
\int_{\R^n\setminus B(0,R)} \frac{1}{a_\epsilon (x)^{\frac{2}{p-2}}}  dx < \delta,
$$
and we have
$$
\| u_k \|_{L^2(\R^n\setminus B(0,R))}^2 \leq \left[ \int_{\R^n\setminus B(0,R)} \frac{1}{a_\epsilon (x)^{\frac{2}{p-2}}}  dx  \right]^\frac{1}{\theta'}  \left[ \int_{\R^n\setminus B(0,R)} a_\epsilon(x) | u_k(x) |^p dx  \right]^\frac{2}{p}  \leq \delta^\frac{1}{\theta'}   \limsup_{k \to \infty} \| u_k\|_{E_\epsilon}^2 .
$$
Therefore 
$$
\| u_k \|_{L^2(\R^n)} = \| u_k \|_{L^2(B(0,R))} + \| u_k \|_{L^2(\R^n\setminus B(0,R))} \stackrel{k \to \infty}{\longrightarrow} 0.
$$ 

To conclude, we recall that $\{ u_k \} \subset E_\epsilon \hookrightarrow L^{p^*}(\R^n) $ is a bounded sequence, thus for all $2 < r <p^*$, one has by interpolation's inequality that
$$
\| u_k \|_{L^r(\R^n)} \leq \| u_k \|_{L^2(\R^n)}^\alpha   \| u_k \|_{L^{p^*}(\R^n)}^{1-\alpha} \to 0 .
$$
\end{proof}

\subsection{Monotone operator}

In order to rewrite the problem \eqref{eq:p-lap} in an abstract setting we consider the (nonlinear) operator $A_\epsilon : E_\epsilon \to E_\epsilon^*$ defined by
$$
\langle A_\epsilon u, v \rangle_{E^*_\epsilon,E_\epsilon} = \int_{\R^n} \left(  |\nabla u|^{p-2} \nabla u \cdot \nabla v  + a_\epsilon( x ) | u |^{p-2} u v \right) dx, \quad \forall \, v \in E_\epsilon,
$$
where $\langle \cdot, \cdot \rangle_{E^*_\epsilon,E_\epsilon} $ denotes the pair of duality between $E^*_\epsilon$ and $E_\epsilon$.

By Tartar's inequality, \cite{Vrabie:87}, one can show that the operators $A_\epsilon$, $\epsilon \in [0,\epsilon_0]$, are monotone, hemicontinuous and coercive. Let us now to consider the $L^r$-realization, $2\leq r \leq p^*$, of the operator $A_\epsilon$, denoted by $A_{\epsilon,r}$, given by
\begin{eqnarray*}
D(A_{\epsilon,r}) & = &  \{ u \in E_\epsilon :A_{\epsilon} u \in L^r(\R^n)  \}, \\ 
A_{\epsilon,r} u & = & A_{\epsilon} u, \quad \forall \, u \in D(A_{\epsilon,r})
\end{eqnarray*}

The operators $A_{\epsilon,r}$ can also be seen as the subdifferential of the lower semicontinuous convex functions $\varphi_{\epsilon,r}: L^r(\R^n) \to (-\infty, \infty]$ defined by
\begin{equation}
\varphi_{\epsilon,r}(u)= 
\begin{cases}
\frac1p \, \| u \|_{E_\epsilon}^p, & \text{ if } u \in E_\epsilon \\
\infty, & \text{otherwise.}
\end{cases}
\end{equation}
For our purposes it is of special interest the case $r=2$, and for shorten notation, we drop the index $r$ and we write $A_\epsilon$ for this realization. In this setting the problem \eqref{eq:p-lap} can be written as a quasi-linear evolution equation

\begin{equation}\label{eq:ev-abst}
\begin{array}{rcl}
u_t^\epsilon + A_\epsilon u^\epsilon  & = & B(t,u^\epsilon), \\
u^\epsilon(\tau) & = & u^\epsilon_\tau \in L^2(\R^n)
\end{array}
\end{equation}

\begin{proposition}[\cite{Brezis:73}, Proposition 3.13]\label{teo:Brezis}
Under hypothesis ${(\rm i)}$ and ${(\rm ii)}$ on $B$, for all $u^\epsilon_\tau \in L^2(\R^n)$ there exist a unique solution $u^\epsilon =u^\epsilon(\cdot, u^\epsilon_\tau) \in W^{1,1}(\tau,\infty; L^2(\R^n))$ of \eqref{eq:ev-abst}.
\end{proposition}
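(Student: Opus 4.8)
\emph{Proof plan.} The plan is to read \eqref{eq:ev-abst} as a globally Lipschitz perturbation of a subgradient flow and to deduce the statement from the quoted result of Br\'ezis, the real content being the verification of its hypotheses. First I would record the structural input already established above: the $L^2$-realization $A_\epsilon$ equals $\partial\varphi_{\epsilon,2}$ and is hence maximal monotone on the Hilbert space $L^2(\R^n)$, while $\overline{D(\varphi_{\epsilon,2})}^{\,L^2}=\overline{E_\epsilon}^{\,L^2}=L^2(\R^n)$ by the density embedding of Lemma~\ref{lemma:E}; thus every $u^\epsilon_\tau\in L^2(\R^n)$ is an admissible initial datum. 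Consequently, for any $f\in L^1_{\rm loc}(\tau,\infty;L^2(\R^n))$ the problem $u_t+A_\epsilon u=f$, $u(\tau)=u^\epsilon_\tau$, has a unique integral (mild) solution $u\in C([\tau,\infty);L^2(\R^n))$, and the solutions $u_1,u_2$ driven by $f_1,f_2$ obey the contraction estimate
\[
\|u_1(t)-u_2(t)\|_{L^2(\R^n)}\;\le\;\|u_1(\tau)-u_2(\tau)\|_{L^2(\R^n)}+\int_\tau^t\|f_1(s)-f_2(s)\|_{L^2(\R^n)}\,ds,\qquad t\ge\tau.
\]

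Next I would run a fixed point argument for the semilinear term. Fixing $T>\tau$ and working on the complete metric space $X=C([\tau,T];L^2(\R^n))$, let $\mathcal T v$ be the mild solution of the above problem with $f(t)=B(t,v(t))$; this is well posed because, by ${\rm (i)}$ and ${\rm (ii)}$, $\|B(t,v(t))\|_{L^2(\R^n)}\le L\|v(t)\|_{L^2(\R^n)}+L_1(t)\in L^\infty(\tau,T)$ when $v\in X$. The contraction estimate with equal initial data together with ${\rm (i)}$ gives $\|(\mathcal T v_1)(t)-(\mathcal T v_2)(t)\|_{L^2(\R^n)}\le L\int_\tau^t\|v_1(s)-v_2(s)\|_{L^2(\R^n)}\,ds$, so iterating, $\|\mathcal T^m v_1-\mathcal T^m v_2\|_X\le\frac{(L(T-\tau))^m}{m!}\|v_1-v_2\|_X$; hence a sufficiently high power of $\mathcal T$ is a strict contraction and $\mathcal T$ has a unique fixed point $u^\epsilon$ on $[\tau,T]$. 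Uniqueness makes these local fixed points consistent as $T$ increases, yielding a solution on $[\tau,\infty)$, and the same estimate plus Gronwall's lemma gives uniqueness on the whole half-line.

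Finally I would rule out finite-time blow-up, which here is automatic. Comparing $u^\epsilon$ with the unforced solution $w$ of $w_t+A_\epsilon w=0$, $w(\tau)=u^\epsilon_\tau$ (for which $\|w(t)\|_{L^2(\R^n)}\le\|u^\epsilon_\tau\|_{L^2(\R^n)}$ by the contraction estimate) and using ${\rm (i)}$--${\rm (ii)}$, one gets $\|u^\epsilon(t)\|_{L^2(\R^n)}\le\|u^\epsilon_\tau\|_{L^2(\R^n)}+\int_\tau^t L_1(s)\,ds+L\int_\tau^t\|u^\epsilon(s)\|_{L^2(\R^n)}\,ds$, and Gronwall's inequality bounds $\|u^\epsilon(t)\|_{L^2(\R^n)}$ on every finite interval, so the solution is global. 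Since $t\mapsto B(t,u^\epsilon(t))$ is then known to lie in $L^1_{\rm loc}(\tau,\infty;L^2(\R^n))$, the regularizing property of subgradient flows (the quoted result of Br\'ezis) upgrades $u^\epsilon$ to a strong solution with $u^\epsilon\in W^{1,1}$ on bounded subintervals of $[\tau,\infty)$, as claimed. I expect the only genuinely delicate ingredient to be the structural one already disposed of in the excerpt --- that the a priori merely densely defined $L^2$-realization $A_\epsilon$ is exactly the maximal monotone subdifferential of $\varphi_{\epsilon,2}$ with $\overline{D(A_\epsilon)}=L^2(\R^n)$ --- since, once that is in hand, the non-autonomous and only $L^1$-in-time perturbation is handled purely by the contraction-plus-Gronwall scheme, with the linear growth in ${\rm (i)}$--${\rm (ii)}$ precluding blow-up.
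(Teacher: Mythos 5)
The paper does not actually prove this proposition: it is quoted verbatim from Br\'ezis (Proposition 3.13 of \cite{Brezis:73}), which already covers Lipschitz perturbations of subdifferential flows, so the paper's entire ``proof'' is the observation that $A_\epsilon=\partial\varphi_{\epsilon,2}$ is maximal monotone with $\overline{D(\varphi_{\epsilon,2})}=\overline{E_\epsilon}=L^2(\R^n)$ and that (i)--(ii) put $B$ in the scope of that theorem. Your plan instead rebuilds the perturbed result from the unperturbed theory: Benilan-type integral solutions and the contraction estimate for $u_t+A_\epsilon u\ni f$ with $f\in L^1_{\rm loc}$, Picard iteration on $C([\tau,T];L^2(\R^n))$ with the factorial gain $\frac{(L(T-\tau))^m}{m!}$, Gronwall for global existence, and a final regularity upgrade. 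That is a legitimate and more self-contained route (it is essentially how the cited result is proved), and the structural verification you isolate --- $A_\epsilon$ maximal monotone, dense domain --- is indeed the only input the paper itself supplies. Your existence, uniqueness and globality steps are sound.

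The one genuine soft spot is the last step. The regularizing (smoothing) property of subgradient flows does not follow from $f\in L^1_{\rm loc}(\tau,\infty;L^2(\R^n))$, as you state it; Br\'ezis's smoothing theorem (Theorem 3.6 of \cite{Brezis:73}) requires $f\in L^2_{\rm loc}$. This is easily repaired here, since you have already observed that $t\mapsto B(t,u^\epsilon(t))$ is locally bounded in $L^2(\R^n)$, hence locally $L^2$ in time. But even then, for initial data $u^\epsilon_\tau\in L^2(\R^n)$ that need not lie in $E_\epsilon=D(\varphi_{\epsilon,2})$, the smoothing effect only yields $\sqrt{t-\tau}\,u^\epsilon_t\in L^2_{\rm loc}$, i.e.\ $W^{1,1}$ regularity on compact subintervals of $(\tau,\infty)$, and integrability of $u^\epsilon_t$ up to the initial time does not follow from this weighted bound alone (already the linear heat semigroup shows it can fail for general $L^2$ data). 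So your scheme proves existence, uniqueness, globality and strong-solution regularity for $t>\tau$, but the literal $W^{1,1}(\tau,\infty;L^2(\R^n))$ statement, as the paper phrases it, is really being taken on the authority of Br\'ezis's Proposition 3.13 (or requires $u^\epsilon_\tau\in E_\epsilon$); you should either quote that statement precisely or weaken the regularity claim to $W^{1,1}_{\rm loc}((\tau,\infty);L^2(\R^n))$, which is all the rest of the paper uses.
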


\section{Uniform Dissipativness}\label{sec:uniform}

In this section we establish some uniform bounds on solutions of the problem \eqref{eq:ev-abst} in order to derive existence of pullback attractors.

\begin{lemma}\label{lemma:l2bound} 
Let $u^\epsilon(\cdot, u^\epsilon_\tau) \in C([\tau, \infty), L^2(\R^n))$ be the global solution of \eqref{eq:ev-abst}. Then there exist constant $T_1$ (not dependent on $\epsilon$) and a nondecreasing function $\beta_1 : \R \to \R $, such that
$$
\| u^\epsilon(t, u^\epsilon_\tau) \|_{L^2 (\R^n)} \leq \beta_1(t), \quad \forall \; t \geq T_1 + \tau .
$$
\end{lemma}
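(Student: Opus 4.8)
The plan is to obtain the bound via a standard energy estimate, testing the equation with $u^\epsilon$ and applying Gronwall's inequality, while being careful that every constant that appears is independent of $\epsilon \in [0,\epsilon_0]$. First I would take the duality pairing of \eqref{eq:ev-abst} with $u^\epsilon(t)$, obtaining
$$
\frac12 \frac{d}{dt}\| u^\epsilon(t) \|_{L^2(\R^n)}^2 + \| u^\epsilon(t) \|_{E_\epsilon}^p = \langle B(t,u^\epsilon(t)), u^\epsilon(t) \rangle_{L^2(\R^n)},
$$
using $\langle A_\epsilon u, u\rangle = \| u \|_{E_\epsilon}^p$. On the right-hand side I would write $B(t,u^\epsilon) = (B(t,u^\epsilon) - B(t,0)) + B(t,0)$ and use hypothesis (i) together with the notation $L_1(t) = \| B(t,0) \|_{L^2(\R^n)}$ from (ii) to get
$$
\langle B(t,u^\epsilon), u^\epsilon \rangle_{L^2(\R^n)} \leq L \| u^\epsilon \|_{L^2(\R^n)}^2 + L_1(t) \| u^\epsilon \|_{L^2(\R^n)} \leq \Big(L + \tfrac12\Big)\| u^\epsilon \|_{L^2(\R^n)}^2 + \tfrac12 L_1(t)^2.
$$

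Next I would exploit Lemma \ref{lemma:E}: the embedding $E_\epsilon \hookrightarrow L^2(\R^n)$ has a constant $c$ independent of $\epsilon$, so $\| u^\epsilon \|_{E_\epsilon}^p \geq c^{-p/2}\, \| u^\epsilon \|_{L^2(\R^n)}^p$. Writing $y(t) = \| u^\epsilon(t) \|_{L^2(\R^n)}^2$, this yields a differential inequality of the form
$$
y'(t) + \kappa\, y(t)^{p/2} \leq (2L+1)\, y(t) + L_1(t)^2,
$$
with $\kappa = 2 c^{-p/2} > 0$. Since $p/2 > 1$, the superlinear dissipative term dominates the linear term for large $y$: there is a radius $\rho_0$ (depending only on $L$, $\kappa$, hence not on $\epsilon$) such that $\kappa y^{p/2} \geq (2L+2)\, y$ whenever $y \geq \rho_0$. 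Then for $y \geq \rho_0$ one has $y'(t) \leq -y(t) + L_1(t)^2$, and a Gronwall/comparison argument on this absorbing inequality gives, for $t \geq \tau$,
$$
y(t) \leq y(\tau)\, e^{-(t-\tau)} + \int_\tau^t e^{-(t-s)} L_1(s)^2\, ds \leq y(\tau)\, e^{-(t-\tau)} + L_1(t)^2,
$$
using that $L_1$ is nondecreasing by (ii). Choosing $T_1$ large enough that $y(\tau)\, e^{-T_1} \leq \rho_0$ for the relevant range of initial data, and setting $\beta_1(t)^2 = \rho_0 + L_1(t)^2 + 1$, one obtains $\| u^\epsilon(t,u^\epsilon_\tau)\|_{L^2(\R^n)} \leq \beta_1(t)$ for all $t \geq T_1 + \tau$; the function $\beta_1$ is nondecreasing because $L_1$ is, and it is independent of $\epsilon$ because $\kappa$, $\rho_0$ and $L$ are.

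The main technical point to handle carefully is the regularity needed to justify the energy identity: Proposition \ref{teo:Brezis} only gives $u^\epsilon \in W^{1,1}(\tau,\infty;L^2(\R^n))$, so I would either invoke the chain rule for the convex functional $\varphi_{\epsilon,2}$ along the strong solution (which is legitimate for subdifferential evolution equations in the Brezis framework, since $t \mapsto \varphi_{\epsilon,2}(u^\epsilon(t))$ is then absolutely continuous), or first derive the estimate for smooth data and pass to the limit by the Lipschitz dependence on initial conditions. A secondary subtlety is the transition from the "while $y \geq \rho_0$" regime to the conclusion: once $y$ enters the sublevel set $\{y < \rho_0\}$ it cannot leave it by more than the forcing allows, so a short continuity/maximal-interval argument pins $y(t)$ below $\rho_0 + L_1(t)^2$ thereafter. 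Neither obstacle is serious, but the $\epsilon$-uniformity of $\rho_0$ — which rests entirely on the $\epsilon$-independence of the embedding constant $c$ from Lemma \ref{lemma:E} — is the crux of why the statement is true with a single $T_1$ and a single $\beta_1$.
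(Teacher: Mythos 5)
Your overall strategy (test with $u^\epsilon$, use $\langle A_\epsilon u^\epsilon,u^\epsilon\rangle=\|u^\epsilon\|_{E_\epsilon}^p$, the $\epsilon$-independent embedding constant from Lemma \ref{lemma:E}, and the Lipschitz hypotheses on $B$) is exactly the paper's starting point, and your regularity remarks are fine. The gap is in how you exploit the superlinear dissipation. You only use $\kappa y^{p/2}\geq (2L+2)y$ above a threshold $\rho_0$ and then run a linear Gronwall estimate, which forces you to choose $T_1$ so that $y(\tau)e^{-T_1}\leq\rho_0$, i.e.\ your $T_1$ depends on $\|u^\epsilon_\tau\|_{L^2(\R^n)}$. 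Since the datum in the lemma is $u^\epsilon_\tau$ (it varies with $\epsilon$ and is an arbitrary element of $L^2(\R^n)$), such a $T_1$ is in general not independent of $\epsilon$, and it is certainly not the single universal constant the lemma asserts and the rest of the paper uses: Lemma \ref{lemma:Ebound} takes $R=T_1$ and $T_2=2T_1$ as fixed constants, and the absorbing family $K_\epsilon(t)=\overline{B^{E_\epsilon}(0,\beta_2(t))}^{L^2(\R^n)}$ is meant to absorb \emph{all} initial data after the same transient time. Your hedge ``for the relevant range of initial data'' is precisely the point that needs to be removed, not assumed.

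The fix is to keep the full strength of the term $\gamma y^{p/2}$ rather than linearizing it: after Young's inequality absorbs the $L\|u^\epsilon\|^2$ and $L_1(t)\|u^\epsilon\|$ terms into the $p$-power (this is what the paper does with $\theta=p/2$ and a small $\eta$), one is left with $y'+2\gamma y^{p/2}\leq 2\delta(t)$, and the comparison result for such superlinear inequalities (\cite[Lemma 5.1]{Teman:88}, the step the paper invokes) gives
\begin{equation*}
\tfrac12\,y(t)\ \leq\ \Bigl(\tfrac{\delta(t)}{\gamma}\Bigr)^{\frac{2}{p}}+\Bigl[\tfrac{\gamma}{2}(p-2)(t-\tau)\Bigr]^{-\frac{2}{p-2}},
\end{equation*}
a bound containing no trace of $y(\tau)$: solutions come down from arbitrarily large (even infinite) initial values in finite time governed only by $\gamma$ and $p$. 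Then $T_1$ is fixed once and for all by requiring $[\tfrac{\gamma}{2}(p-2)T_1]^{-2/(p-2)}\leq 1$, and $\beta_1(t)$ depends only on $\delta(t)$, hence on $L$, $L_1$ and the embedding constant, uniformly in $\epsilon$, $\tau$ and $u^\epsilon_\tau$. With this replacement your argument matches the paper's; without it, the stated uniformity of $T_1$ (and the later construction of the pullback absorbing family) is not established.
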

\begin{proof}
Multiplying \eqref{eq:ev-abst} by $u^\epsilon$ and integrating over $\R^n$ we have that
\begin{equation}\label{eq:intbypart}
\frac{1}{2} \frac{d}{dt} \| u^\epsilon \|^2_{L^2(\R^n)}  + \langle A_\epsilon u^\epsilon, u^\epsilon \rangle = \langle B (t,u^\epsilon) - B(t,0), u^\epsilon \rangle + \langle B(t,0), u^\epsilon \rangle,
\end{equation}
where $\langle \cdot, \cdot \rangle$ denote the inner product in $L^2(\R^n)$. Thus
\begin{equation*}
\frac{1}{2} \frac{d}{dt} \| u^\epsilon \|^2_{L^2(\R^n)}  +  c \| u^\epsilon \|^p_{L^2(\R^n)} \leq L \| u^\epsilon \|^2_{L^2(\R^n)} +  L_1(t) \| u^\epsilon \|_{L^2(\R^n)}, 
\end{equation*}
where $c > 0$ is the constant given in Lemma \ref{lemma:E}.

Taking $\theta = \frac{p}{2}$, it follows from Young's inequality that for all $\eta > 0$,
\begin{eqnarray*}
\frac{1}{2} \frac{d}{dt} \| u^\epsilon \|^2_{L^2(\R^n)}  +  c \| u^\epsilon \|^p_{L^2(\R^n)} \leq \frac{1}{\theta'}\left( \frac{L}{\eta} \right)^{\theta'} +  \frac{\eta^\theta}{\theta} \| u^\epsilon \|^p_{L^2(\R^n)} + \frac{1}{p'}\left( \frac{L_1 (t)}{\eta} \right)^{p'}  +  \frac{\eta^p}{p} \| u^\epsilon \|^p_{L^2(\R^n)} .
\end{eqnarray*}

Choosing $\eta > 0$ such that $\gamma = c - (\frac{\eta^\theta}{\theta} + \frac{\eta^p}{p}) > 0$, it follows from \cite[Lemma 5.1]{Teman:88} that
\begin{equation}
\frac{1}{2} \| u^\epsilon \|^2_{L^2(\R^n)} \leq \left( \frac{ \delta(t)}{\gamma} \right)^{\frac2p} + \left[ \frac{\gamma}{2}(p-2)(t-\tau)\right]^{\frac{-2}{p-2}},
\end{equation}
where $ \delta(t) = \frac{1}{\theta'}\left( \frac{L}{\eta} \right)^{\theta'} + \frac{1}{p'}\left( \frac{L_1 (t)}{\eta} \right)^{p'} $. Taking $T_1 > 0$ satisfying $ \left[ \frac{\gamma}{2}(p-2)T_1\right]^{\frac{-2}{p-2}} \leq 1$,
we have for $ t \geq T_1 + \tau$ that
$$
\frac{1}{2} \| u^\epsilon \|^2_{L^2(\R^n)} \leq\left( \frac{ \delta(t)}{\gamma} \right)^{\frac2p}  +1 : = \beta_1 (t).
$$
\end{proof}

\begin{lemma}\label{lemma:Ebound}
Let $u^\epsilon(\cdot, u^\epsilon_\tau) \in C([\tau, \infty), L^2(\R^n))$ be the global solution of \eqref{eq:ev-abst}. Then there exist constant $T_2$ (not dependent on $\epsilon$) and a nondecreasing function $\beta_2 : \R \to \R $, such that
$$
\| u^\epsilon(t, u^\epsilon_\tau) \|_{E_\epsilon} \leq \beta_2(t), \quad \forall \; t \geq T_2 + \tau .
$$
\end{lemma}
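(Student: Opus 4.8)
The plan is to run a standard energy estimate, this time testing the equation not with $u^\epsilon$ but with $A_\epsilon u^\epsilon$ (equivalently, differentiating the convex functional $\varphi_{\epsilon,2}$ along the trajectory), which is the natural way to gain control of the $E_\epsilon$-norm since $\langle A_\epsilon u^\epsilon, A_\epsilon u^\epsilon\rangle = \|A_\epsilon u^\epsilon\|_{L^2}^2$ and $\tfrac{d}{dt}\varphi_{\epsilon,2}(u^\epsilon) = \langle A_\epsilon u^\epsilon, u^\epsilon_t\rangle$. First I would take the inner product of \eqref{eq:ev-abst} with $u^\epsilon_t$ to get
$$
\|u^\epsilon_t\|_{L^2(\R^n)}^2 + \frac{d}{dt}\varphi_{\epsilon,2}(u^\epsilon) = \langle B(t,u^\epsilon), u^\epsilon_t\rangle \le \frac12\|u^\epsilon_t\|_{L^2(\R^n)}^2 + \frac12\|B(t,u^\epsilon)\|_{L^2(\R^n)}^2,
$$
so that $\tfrac{d}{dt}\varphi_{\epsilon,2}(u^\epsilon) \le \tfrac12\|B(t,u^\epsilon)\|_{L^2(\R^n)}^2$. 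Using hypothesis (i), $\|B(t,u^\epsilon)\|_{L^2} \le L\|u^\epsilon\|_{L^2} + L_1(t)$, and then Lemma \ref{lemma:l2bound} to bound $\|u^\epsilon\|_{L^2(\R^n)} \le (2\beta_1(t))^{1/2}$ for $t \ge T_1 + \tau$, one gets $\tfrac{d}{dt}\varphi_{\epsilon,2}(u^\epsilon(t)) \le g(t)$ for $t \ge T_1+\tau$, where $g$ is nondecreasing in $t$ (it is built from $\beta_1$ and $L_1$, both nondecreasing) and $\epsilon$-independent.

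Next I would convert this differential inequality into a pointwise bound on $\varphi_{\epsilon,2}(u^\epsilon(t)) = \tfrac1p\|u^\epsilon(t)\|_{E_\epsilon}^p$. The cleanest route is the uniform Gronwall / averaging trick: on an interval $[t-1,t]$ with $t \ge T_1 + \tau + 1$, integrating $\tfrac{d}{ds}\varphi_{\epsilon,2}(u^\epsilon(s)) \le g(s) \le g(t)$ gives
$$
\varphi_{\epsilon,2}(u^\epsilon(t)) \le \varphi_{\epsilon,2}(u^\epsilon(s)) + g(t), \qquad s \in [t-1,t],
$$
and then averaging in $s$ over $[t-1,t]$ yields $\varphi_{\epsilon,2}(u^\epsilon(t)) \le \int_{t-1}^{t}\varphi_{\epsilon,2}(u^\epsilon(s))\,ds + g(t)$. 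So it remains to bound the time average of $\varphi_{\epsilon,2}(u^\epsilon)$, and this is exactly what the first energy identity \eqref{eq:intbypart} provides: integrating \eqref{eq:intbypart} over $[t-1,t]$, using $\langle A_\epsilon u^\epsilon, u^\epsilon\rangle = \|u^\epsilon\|_{E_\epsilon}^p = p\,\varphi_{\epsilon,2}(u^\epsilon)$ and the same right-hand side estimates together with Lemma \ref{lemma:l2bound}, gives $\int_{t-1}^t \varphi_{\epsilon,2}(u^\epsilon(s))\,ds \le h(t)$ with $h$ nondecreasing and $\epsilon$-independent. Combining, $\tfrac1p\|u^\epsilon(t)\|_{E_\epsilon}^p \le h(t) + g(t)$ for all $t \ge T_2 + \tau$ with $T_2 := T_1 + 1$, and setting $\beta_2(t) := \big(p(h(t)+g(t))\big)^{1/p}$ — which is nondecreasing — finishes the proof.

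The main technical point to be careful about is the legitimacy of the computation $\tfrac{d}{dt}\varphi_{\epsilon,2}(u^\epsilon(t)) = \langle A_\epsilon u^\epsilon(t), u^\epsilon_t(t)\rangle$ and of testing with $u^\epsilon_t$: a priori the solution from Proposition \ref{teo:Brezis} only lies in $W^{1,1}(\tau,\infty;L^2(\R^n))$, so $u^\epsilon_t$ is merely $L^1$ in time and these identities need the regularizing effect of the monotone (subdifferential) structure — for $t > \tau$ one has $u^\epsilon(t) \in D(A_\epsilon)$, $t \mapsto \varphi_{\epsilon,2}(u^\epsilon(t))$ is absolutely continuous on compact subsets of $(\tau,\infty)$, and the chain rule holds a.e.; this is precisely the content of the Brezis theory (see \cite{Brezis:73}). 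Once that is invoked, everything else is the routine Young/Gronwall bookkeeping indicated above, and the $\epsilon$-uniformity is automatic because every constant used (the embedding constant $c$ of Lemma \ref{lemma:E}, the Lipschitz constant $L$, the function $L_1$, and $\beta_1$) is itself $\epsilon$-independent.
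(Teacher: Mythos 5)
Your proposal is correct and follows essentially the same strategy as the paper: test the equation with $u^\epsilon_t$ to get a differential inequality for $\tfrac1p\|u^\epsilon\|_{E_\epsilon}^p$, bound the time average of this quantity by integrating the first energy identity \eqref{eq:intbypart} together with Lemma \ref{lemma:l2bound}, and conclude by a uniform-Gronwall argument. The only differences are cosmetic — you absorb the $L^2$-term directly via $\beta_1$ and use a window of length $1$ (so $T_2=T_1+1$ and a plain averaging step), whereas the paper keeps a linear term and invokes Temam's uniform Gronwall lemma with $R=T_1$, giving $T_2=2T_1$; your remark justifying $\tfrac{d}{dt}\varphi_{\epsilon,2}(u^\epsilon)=\langle A_\epsilon u^\epsilon,u^\epsilon_t\rangle$ via the Brezis subdifferential theory is a point the paper passes over silently.
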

\begin{proof}
By multiplying \eqref{eq:ev-abst} by $u_t^\epsilon$ we have for Young's inequality that
$$
\frac12 \| u_t^\epsilon \|^2_{L^2(\R^n)} + \frac{1}{p} \frac{d}{dt} \| u^\epsilon \|_{E_\epsilon}^p  \leq \frac{1}{2} \left( L \| u^\epsilon \|_{L^2(\R^n)} + L_1(t) \right)^2,
$$
and consequently, for $\theta= \frac{p}{2}$, we obtain
\begin{equation}\label{eq:limbound} 
\frac{1}{p} \frac{d}{dt} \| u^\epsilon \|_{E_\epsilon}^p \leq L^2 \| u^\epsilon \|_{L^2(\R^n)}^2 + L_1(t)^2 \leq \frac{1}{\theta'} L^{2 \theta'} + \frac{ 1 }{\theta} \| u^\epsilon \|_{E_\epsilon}^p + L_1(t)^2  .
\end{equation}

Fix $R > 0$ and consider the real functions $a_1=a_1(t)$ and $a_2=a_2(t)$ given by: 
$\displaystyle a_1:= \int_t^{t + R} \frac{ p }{\theta} \,  ds = \frac{ R p }{\theta} $ and $\displaystyle a_2: = \frac{R p}{\theta'} L^{2 \theta'} + R p L_1(t + R)^2 \geq \int_t^{t + R} \left( \frac{p}{\theta'} L^{2 \theta'} + p L_1(s)^2  \right) \, ds $.

Recalling \eqref{eq:intbypart}, we have by integrating from $t$ to $t +R$ that
$$
\int_t^{t+R} \| u^\epsilon \|^p_{E_\epsilon} \leq \frac{1}{2} \| u^\epsilon \|^2_{L^2(\R^n)} + L \int_t^{t+R} \| u^\epsilon (s) \|^2_{L^2(\R^n)} \, ds + L_1(t+R) \int_t^{t+R}  \| u^\epsilon (s) \|_{L^2(\R^n)} \, ds. 
$$

It follows from Lemma \ref{lemma:l2bound}, for $t \geq T_1 + \tau$ 
$$
\int_t^{t+R} \| u^\epsilon \|^p_{E_\epsilon} \leq \frac{1}{2} \beta_1(t) + R L \beta_1(t+ R) + R L_1(t+R)  \beta_1(t+ R)^{\frac12} := a_3(t).
$$

By \cite[Lemma 1.1]{Teman:88}, we obtain
\begin{equation}
\| u^\epsilon (t+ R) \|^p_{E_\epsilon} \leq \left( \frac{a_3(t)}{R} + a_2(t) \right) \, e^{a_1}:= \beta_2 (t) , \quad t \geq \tau .
\end{equation}

Choosing $R = T_1$ we have for $t-\tau \geq T_2 := 2 T_1$ that
$$
\| u^\epsilon (t) \|^p_{E_\epsilon} \leq \beta_2 (t) .
$$
\end{proof}

\section{Existence of pullback attractors}

In this section we get existence of a family $\{{\mathcal A}_\epsilon(t):t\in \mathbb{R}\}$ of pullback attractors for the problem \eqref{eq:ev-abst} as well its upper-semicontinuity in $\epsilon=0$. 

We start remembering the definition of Hausdorff semi-distance between two subsets $A$ and $B$ of a metric space $(X,d)$:
\[
\operatorname{dist}_H(A,B) = \sup_{a\in A} \inf_{b\in B} d(a,b).
\]

\begin{definition} \label{pull attraction} 
Let $\{S(t,\tau):t\geqslant \tau\in {\mathbb R}\}$ be an evolution
process in a metric space $X$. Given  $\mathcal{A}$ and $B$ subsets of $X$, we
say that $\mathcal{A}$ \emph{pullback attracts} $B$ at time $t$ if
$$
\lim_{\tau \to -\infty} \operatorname{dist}_H(S(t,\tau)B, \mathcal{A})= 0.
$$
\end{definition}

\begin{definition} \label{def4.6} 
We say that a family of subsets $\{\mathcal{A}(t):t\in \mathbb{R}\}$ of $X$ is
\emph{invariant} relatively to the evolution process
$\{S(t,\tau):t\geqslant \tau\in \mathbb{R}\}$ if $S(t,\tau) \mathcal{A}(\tau)
= \mathcal{A}(t)$,
for any $t\geqslant \tau$.
\end{definition}

\begin{definition} \label{pull-attractor}
A family of subsets $\{{\mathcal A}(t):t\in \mathbb{R}\}$ of $X$ is
called a \emph{pullback attractor} for the evolution process
$\{S(t,\tau): t\geqslant \tau\in {\mathbb R}\}$ if it is invariant,
$\mathcal{A}(t)$ is compact for all $t\in \mathbb{R}$, and pullback attracts
bounded subsets of $X$ at time $t$, for each $t\in \mathbb{R}$.
\end{definition}

The next result guarantees the existence of pullback attractors.

\begin{theorem}[\cite{Carabalho:10}]\label{teo:carabalho}
Let $\{S(t,\tau):t\geqslant \tau\in {\mathbb R}\}$ be an evolution process in a complete metric space $X$. The statements are equivalents:
\begin{enumerate}
\item[{\rm (i)}] There exist a family of compact subsets of $X$, $\{ K(t) \}_{t \in \R}$, that pullback attracts bounded sets of $X$ at time $t$;

\item[{\rm(ii)}] The process $\{S(t,\tau):t\geqslant \tau\in {\mathbb R}\}$ has a pullback attractor. 

%
\end{enumerate}
\end{theorem}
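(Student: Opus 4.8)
The implication (ii)$\Rightarrow$(i) is immediate: if $\{\mathcal{A}(t):t\in\R\}$ is a pullback attractor, then by Definition~\ref{pull-attractor} each $\mathcal{A}(t)$ is compact and the family pullback attracts bounded subsets of $X$ at time $t$, so the choice $K(t):=\mathcal{A}(t)$ already furnishes the family required in (i). The whole content of the theorem thus lies in the converse (i)$\Rightarrow$(ii), and the plan is to build the attractor explicitly from pullback $\omega$-limit sets.

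For a bounded set $D\subseteq X$ and $t\in\R$, I would define the pullback $\omega$-limit set
\[
\omega(D,t)=\bigcap_{s\leq t}\overline{\bigcup_{\tau\leq s}S(t,\tau)D},
\]
so that $x\in\omega(D,t)$ exactly when there are sequences $\tau_n\to-\infty$ and $x_n\in D$ with $S(t,\tau_n)x_n\to x$. The first step is to extract compactness from hypothesis (i): since $\operatorname{dist}_H(S(t,\tau)D,K(t))\to 0$ as $\tau\to-\infty$ and $K(t)$ is compact, every sequence $S(t,\tau_n)x_n$ with $\tau_n\to-\infty$ and $x_n\in D$ has a subsequence converging to a point of $K(t)$. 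This single observation yields simultaneously that $\omega(D,t)$ is nonempty, that $\omega(D,t)\subseteq K(t)$ and is hence compact, and --- by the standard argument by contradiction --- that $\omega(D,t)$ pullback attracts $D$ at time $t$.

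Next I would set $\mathcal{A}(t):=\overline{\bigcup_{D}\omega(D,t)}$, the union ranging over all bounded $D\subseteq X$. Because each $\omega(D,t)\subseteq K(t)$, we get $\mathcal{A}(t)\subseteq K(t)$, so $\mathcal{A}(t)$ is compact; and since $\omega(D,t)\subseteq\mathcal{A}(t)$ already pullback attracts $D$, the larger family $\{\mathcal{A}(t)\}$ pullback attracts every bounded set. The remaining and decisive property is invariance, $S(t,\tau)\mathcal{A}(\tau)=\mathcal{A}(t)$ for $t\geq\tau$. I would isolate this in the identity $S(t,\tau)\omega(D,\tau)=\omega(D,t)$: the inclusion ``$\subseteq$'' follows from the evolution law $S(t,\tau)S(\tau,s)=S(t,s)$ together with continuity of $S(t,\tau)$, while ``$\supseteq$'' uses in addition that any realizing sequence $S(t,s_n)x_n=S(t,\tau)\bigl(S(\tau,s_n)x_n\bigr)$ admits a subsequence with $S(\tau,s_n)x_n\to z\in\omega(D,\tau)$, whence the limit equals $S(t,\tau)z$. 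Taking unions over $D$ and using that $S(t,\tau)$ carries the compact set $\mathcal{A}(\tau)$ onto a compact (hence closed) set, so that $S(t,\tau)\overline{E}=\overline{S(t,\tau)E}$ for the precompact sets involved, promotes this to $S(t,\tau)\mathcal{A}(\tau)=\mathcal{A}(t)$.

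I expect the invariance step to be the main obstacle, and it is precisely there that the continuity of the process is used in an essential way; the nonemptiness, compactness and attraction properties require only the compact attracting family $\{K(t)\}$. The technical care will concentrate on interchanging the map $S(t,\tau)$ with the closure operation, which is legitimate exactly because all the sets in play sit inside the compact set $K(\tau)$.
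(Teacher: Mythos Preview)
The paper does not supply its own proof of this theorem: it is stated with a citation to \cite{Carabalho:10} and used as a black box to obtain the subsequent corollary. Consequently there is no ``paper's proof'' to compare your proposal against.

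That said, your outline is the standard construction and is sound. The only place I would urge you to be slightly more careful is the passage from the pointwise identity $S(t,\tau)\omega(D,\tau)=\omega(D,t)$ to the global invariance $S(t,\tau)\mathcal{A}(\tau)=\mathcal{A}(t)$. The union over all bounded $D$ need not be finite, so you are really arguing that $S(t,\tau)\overline{E}=\overline{S(t,\tau)E}$ for $E=\bigcup_D\omega(D,\tau)$; this uses both continuity of $S(t,\tau)$ (for ``$\subseteq$'') and compactness of $\mathcal{A}(\tau)$ (so that $S(t,\tau)\mathcal{A}(\tau)$ is closed, for ``$\supseteq$''). You say this, but in a written proof it deserves one clean sentence rather than a parenthetical remark. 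With that caveat, your proposal is correct and follows exactly the route taken in the cited reference.
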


\begin{corollary}
For each value of the parameter $\epsilon \in [0, \epsilon_0]$, the equation \eqref{eq:p-lap} generates a nonlinear compact process, $\{S_\epsilon(t,\tau): t\geq \tau \in \R \}$, in the space $L^2(\R^n)$, which has a family of pullback attractors $\{{\mathcal A_\epsilon}(t):t\in \mathbb{R}\}$. 
\end{corollary}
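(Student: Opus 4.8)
The plan is to realize \eqref{eq:ev-abst} as an evolution process through Proposition \ref{teo:Brezis}, to check that it is compact by combining the regularizing effect of the flow with the compact embedding of Lemma \ref{lemma:E}, and then to exhibit a family of compact sets that pullback attracts bounded sets, so that Theorem \ref{teo:carabalho} applies. Define $S_\epsilon(t,\tau)\colon L^2(\R^n)\to L^2(\R^n)$, $t\geq\tau$, by $S_\epsilon(t,\tau)u_\tau=u^\epsilon(t,u_\tau)$, where $u^\epsilon(\cdot,u_\tau)$ is the unique solution of \eqref{eq:ev-abst} provided by Proposition \ref{teo:Brezis}. The relation $S_\epsilon(\tau,\tau)=\mathrm{Id}$ is clear, and $S_\epsilon(t,s)S_\epsilon(s,\tau)=S_\epsilon(t,\tau)$ for $t\geq s\geq\tau$ follows from uniqueness. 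To obtain continuity in the initial datum, take two solutions $u^\epsilon,v^\epsilon$, subtract the corresponding equations \eqref{eq:ev-abst}, take the $L^2(\R^n)$ inner product with $u^\epsilon-v^\epsilon$, use the monotonicity of $A_\epsilon$ to discard the term $\langle A_\epsilon u^\epsilon-A_\epsilon v^\epsilon,u^\epsilon-v^\epsilon\rangle\geq 0$, and use hypothesis ${\rm (i)}$ on $B$; this gives
\[
\tfrac12\tfrac{d}{dt}\|u^\epsilon-v^\epsilon\|_{L^2(\R^n)}^2\leq L\,\|u^\epsilon-v^\epsilon\|_{L^2(\R^n)}^2 \quad \text{a.e.},
\]
so Gronwall's lemma yields $\|S_\epsilon(t,\tau)u_\tau-S_\epsilon(t,\tau)v_\tau\|_{L^2(\R^n)}\leq e^{L(t-\tau)}\|u_\tau-v_\tau\|_{L^2(\R^n)}$, which also provides the joint continuity required of an evolution process on the complete metric space $L^2(\R^n)$.

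For the compactness of $S_\epsilon(t,\tau)$ when $t>\tau$, I would use that $A_\epsilon$ is the subdifferential of $\varphi_{\epsilon,2}$: by the regularizing property of subdifferential evolution equations, $u^\epsilon(t,u_\tau)\in E_\epsilon$ for every $t>\tau$, and a bound for $\|u^\epsilon(t,u_\tau)\|_{E_\epsilon}$ in terms of $t-\tau$ and $\|u_\tau\|_{L^2(\R^n)}$ can be derived exactly as in the proof of Lemma \ref{lemma:Ebound} --- integrating \eqref{eq:intbypart} over a short interval and applying \cite[Lemma 1.1]{Teman:88}. Since $E_\epsilon\subset\subset L^2(\R^n)$ by Lemma \ref{lemma:E}, bounded subsets of $E_\epsilon$ are relatively compact in $L^2(\R^n)$; hence $S_\epsilon(t,\tau)$ carries bounded subsets of $L^2(\R^n)$ into relatively compact ones, i.e.\ $\{S_\epsilon(t,\tau):t\geq\tau\}$ is a compact process.

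It remains to produce the pullback attracting family. Fix $t\in\R$ and set
\[
K_\epsilon(t)=\overline{\{\,u\in E_\epsilon:\|u\|_{E_\epsilon}\leq\beta_2(t)\,\}}^{\,L^2(\R^n)},
\]
with $\beta_2$ the nondecreasing function from Lemma \ref{lemma:Ebound}. Because $E_\epsilon$ is reflexive and $E_\epsilon\subset\subset L^2(\R^n)$ (Lemma \ref{lemma:E}), the closed ball of radius $\beta_2(t)$ of $E_\epsilon$ is relatively compact in $L^2(\R^n)$, so $K_\epsilon(t)$ is a compact subset of $L^2(\R^n)$. Given any bounded $D\subset L^2(\R^n)$, Lemma \ref{lemma:Ebound} --- whose bound $\beta_2(t)$ does not depend on the initial data --- gives $\|S_\epsilon(t,\tau)u_\tau\|_{E_\epsilon}\leq\beta_2(t)$ for every $u_\tau\in D$ whenever $t-\tau\geq T_2$, that is, $S_\epsilon(t,\tau)D\subset K_\epsilon(t)$ for all $\tau\leq t-T_2$. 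In particular $\operatorname{dist}_H(S_\epsilon(t,\tau)D,K_\epsilon(t))=0$ for such $\tau$, so $K_\epsilon(t)$ pullback attracts $D$ at time $t$. Thus $\{K_\epsilon(t):t\in\R\}$ is a family of compact sets that pullback attracts bounded subsets of $L^2(\R^n)$, and Theorem \ref{teo:carabalho} yields the pullback attractor $\{\mathcal A_\epsilon(t):t\in\R\}$.

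The step I expect to be the main obstacle is the compactness of the process for small $t-\tau$: one must extract from the subdifferential/Brezis theory the instantaneous smoothing $u^\epsilon(t,u_\tau)\in E_\epsilon$ for $t>\tau$ together with a quantitative estimate, since the absorbing bound of Lemma \ref{lemma:Ebound} only controls the regime $t-\tau\geq T_2$. Once the compact pullback attracting family $\{K_\epsilon(t)\}$ is available, the existence of the pullback attractor is an immediate application of Theorem \ref{teo:carabalho}.
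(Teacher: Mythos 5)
Your proposal is correct and follows essentially the same route as the paper: the process is obtained from Proposition \ref{teo:Brezis}, the compact family $K_\epsilon(t)=\overline{B^{E_\epsilon}(0,\beta_2(t))}^{L^2(\R^n)}$ is shown to pullback attract (indeed absorb) bounded sets via Lemma \ref{lemma:Ebound} together with the compact embedding of Lemma \ref{lemma:E}, and Theorem \ref{teo:carabalho} is then invoked. The only difference is that you spell out details the paper leaves implicit (continuity of $S_\epsilon(t,\tau)$ via monotonicity and Gronwall, and the smoothing argument for compactness of the process), which are correct additions rather than a different method.
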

\begin{proof}
For each value of the parameter $\epsilon \in [0,\epsilon_0]$, Proposition \ref{teo:Brezis}  guarantees that \eqref{eq:ev-abst} generates a (nonlinear) evolution process, $\{S_\epsilon(t,\tau):t\geqslant \tau\in {\mathbb R}\}$, in the space $L^2(\R^n)$, defined by $S_\epsilon(t,\tau) u^\epsilon_\tau = u^\epsilon(t,u^\epsilon_\tau)$. Lemma \ref{lemma:Ebound} shows that the family of compact sets $K_\epsilon(t) = \overline{B^{E_\epsilon}(0,\beta_2(t))}^{L^2(\R^n)}$ pullback attracts bounded sets of $L^2(\R^n)$ at time $t$. Thus by Theorem \ref{teo:carabalho} there exists a family $\{ \mathcal{A}_\epsilon(t): t \in \R \}$ of pullback attractors for $\{S_\epsilon(t,\tau):t\geqslant \tau\in {\mathbb R}\}$.
\end{proof}

\subsection{Upper-semicontinuity of pullback attractors}

Now we prove that the family of pullback attractors $\{\mathcal{A}_\epsilon(t): t \in \R \}$ is upper-semicontinuous in $\epsilon=0$, ie, we prove that
$$
\lim_{\epsilon \to 0} \operatorname{dist}_{H}(\mathcal{A}_\epsilon(t),
\mathcal{A}_0(t))=0.
$$

First, let us to consider $w^\epsilon(\cdot) = u^\epsilon(\cdot, u^\epsilon_\tau) - u^0(\cdot, u^0_\tau)$. Thus $w^\epsilon_t  +  A_\epsilon u^\epsilon - A_0 u^0 = B(t,u^\epsilon) - B(t,u^0)$. Since $a_\epsilon \geq 1$, it follows from Tartar's inequality the existence of $\alpha >0$ such that
\begin{align*}
\langle A_\epsilon u^\epsilon - A_0 u^0, w^\epsilon \rangle & =  \langle  |\nabla u^\epsilon|^{p-2} \nabla u^\epsilon - |\nabla u^0|^{p-2} \nabla u^0, w^\epsilon \rangle + \langle a_\epsilon |u^\epsilon |^{p-2}u^\epsilon - a_0 |u^0|^{p-2} u^0, w^\epsilon  \rangle \\
& =  \langle  |\nabla u^\epsilon|^{p-2} \nabla u^\epsilon - |\nabla u^0|^{p-2} \nabla u^0, w^\epsilon \rangle + \langle a_\epsilon (|u^\epsilon |^{p-2}u^\epsilon - |u^0 |^{p-2}u^0) + (a_\epsilon- a_0) |u^0|^{p-2} u^0, w^\epsilon  \rangle \\
& \geq \alpha ( \| \nabla w^\epsilon \|_{L^2(\R^n)}^p + \| w^\epsilon \|_{L^2(\R^n)}^p ) + \int_{\R^n} (a_\epsilon- a_0) |u^0|^{p-2} u^0 w^\epsilon \, dx .
\end{align*}

Therefore by Hölder's inequality 

\begin{align*}
\frac12 \frac{d}{dt} \| w^\epsilon \|^2_{L^2(\R^n)}  & \leq  -   \int_{\R^n} (a_\epsilon- a_0) |u^0|^{p-2} u^0 w^\epsilon \, dx + \| B(t,u^\epsilon) - B(t,u^0) \|_{L^2(\R^n)} \| w^\epsilon \|_{L^2(\R^n)} \\
& \leq \| a_\epsilon- a_0 \|_{L^\infty (\R^n) } \int_{\R^n} \left( | u^0 |^p + | u^0 |^{p-1}  |u^\epsilon |  \right) dx  + L \| w^\epsilon \|_{L^2 (\R^n)}^2 \\
& \leq \| a_\epsilon- a_0 \|_{L^\infty (\R^n) }  \left( \| u^0 \|_{L^p (\R^n)}^p + \| u^0 \|_{L^p (\R^n)}^{p-1}  \| u^\epsilon \|_{L^p (\R^n)}  \right) + L \| w^\epsilon \|_{L^2 (\R^n)}^2
\end{align*}

The uniform estimates given in Lemma \ref{lemma:Ebound} lead to
$$
\frac12 \frac{d}{dt} \| w^\epsilon \|^2_{L^2(\R^n)} \leq M \| a_\epsilon- a_0 \|_{L^\infty (\R^n) } + L \| w^\epsilon \|_{L^2 (\R^n)}^2,
$$
in compact subsets of $\mathbb{R}$. Integrating this last inequality from $\tau$ to $t$, we obtain
$$
\| w^\epsilon \|^2_{L^2(\R^n)} \leqslant \| u^\epsilon_\tau - u^0_\tau \|_{L^2 (\R^n)}^2 + 2M(t-\tau) \| a_\epsilon- a_0 \|_{L^\infty (\R^n) } + 2L \int_\tau^t \| w^\epsilon (s) \|^2_{L^2(\R^n)} \, ds .
$$

Hence, by Gronwall's Inequality

\begin{equation}\label{eq:continuity} 
\| w^\epsilon \|^2_{L^2(\R^n)} \leqslant \tilde{M}\left( \| u^\epsilon_\tau - u^0_\tau \|_{L^2 (\R^n)}^2 + \| a_\epsilon- a_0 \|_{L^\infty (\R^n) } \right),
\end{equation}
in compact subsets of $\mathbb{R}$. 

We can derive from this discussion the following Lemma

\begin{lemma}
Let $\{S_\epsilon(t,\tau):t\geqslant \tau\in {\mathbb R}\}$ be the process generated by the problem \eqref{eq:ev-abst}. If $u^\epsilon_\tau \stackrel{\epsilon \to 0}{\longrightarrow} u^0_\tau$ in $L^2(\R^n)$ then $S_\epsilon(t,\tau) u^\epsilon_\tau \stackrel{\epsilon \to 0}{\longrightarrow} S_0(t,\tau) u^0_\tau$ in $L^2(\R^n)$ uniformly for $t$ in compact subsets of $\R$.
\end{lemma}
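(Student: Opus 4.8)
The plan is that essentially all of the analytic work has already been carried out in the derivation of the estimate \eqref{eq:continuity}, so the proof of the lemma reduces to reading off its consequences. Recall that by definition $w^\epsilon(t)=S_\epsilon(t,\tau)u^\epsilon_\tau-S_0(t,\tau)u^0_\tau$, so the assertion to be proved is precisely that $\sup_{t\in K}\|w^\epsilon(t)\|_{L^2(\R^n)}\to 0$ as $\epsilon\to 0$ for every compact set $K\subset[\tau,\infty)$.

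First I would fix such a compact set, say $K=[\tau,T]$. The decisive point is that the constant $\tilde M$ appearing in \eqref{eq:continuity} may be chosen independently of $\epsilon\in[0,\epsilon_0]$, depending on $K$ only through $T$: it is assembled from the Lipschitz constant $L$ of $B$, from the length $T-\tau$ via Gronwall's inequality, and from the uniform dissipative bounds of Lemma \ref{lemma:Ebound}, whose function $\beta_2$ is nondecreasing and $\epsilon$-independent, together with the $\epsilon$-independent embedding constant of $E_\epsilon\hookrightarrow L^p(\R^n)$ from Lemma \ref{lemma:E} used to control $\|u^0(s)\|_{L^p(\R^n)}$ and $\|u^\epsilon(s)\|_{L^p(\R^n)}$ on $K$. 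Granting this, I take the supremum over $t\in K$ in \eqref{eq:continuity} to get
\[
\sup_{t\in K}\bigl\|S_\epsilon(t,\tau)u^\epsilon_\tau-S_0(t,\tau)u^0_\tau\bigr\|_{L^2(\R^n)}^2
\le \tilde M(K)\Bigl(\|u^\epsilon_\tau-u^0_\tau\|_{L^2(\R^n)}^2+\|a_\epsilon-a_0\|_{L^\infty(\R^n)}\Bigr),
\]
and let $\epsilon\to 0$: the first term on the right tends to $0$ by hypothesis, the second by the standing assumption $\|a_\epsilon-a_0\|_{L^\infty(\R^n)}\to 0$ from the Introduction. Since $K$ was an arbitrary compact subset of $[\tau,\infty)$, this yields the claimed uniform convergence.

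The only point that needs a little care — and the step I expect to be the main, if modest, obstacle — is the $\epsilon$-uniformity of $\tilde M$ over the \emph{whole} of $K$, in particular on the initial layer $[\tau,\tau+T_2]$ on which Lemma \ref{lemma:Ebound} does not yet apply. There I would instead bound $\|u^\epsilon(s)\|_{L^2(\R^n)}$ by the a priori estimate from the proof of Lemma \ref{lemma:l2bound} (valid already from $s=\tau$, with an $\epsilon$-independent right-hand side depending on $\|u^\epsilon_\tau\|_{L^2}$, which stays bounded since $u^\epsilon_\tau\to u^0_\tau$) and then control $\|u^\epsilon(s)\|_{E_\epsilon}$, hence $\|u^\epsilon(s)\|_{L^p(\R^n)}$, through the energy estimate \eqref{eq:limbound}; all the constants involved are again independent of $\epsilon$, so the constant $\tilde M$ on the compact interval can indeed be chosen uniformly in $\epsilon$, completing the argument.
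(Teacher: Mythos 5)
Your main argument coincides with the paper's: the lemma is read off directly from the estimate \eqref{eq:continuity}, whose right-hand side tends to zero under the hypotheses $u^\epsilon_\tau \to u^0_\tau$ and $\|a_\epsilon - a_0\|_{L^\infty(\R^n)} \to 0$, uniformly for $t$ in a compact set because $\tilde{M}$ depends on $t$ only through $t-\tau$, the Lipschitz constant $L$, and the $\epsilon$-independent bounds coming from Lemmas \ref{lemma:E} and \ref{lemma:Ebound}. You are also right that the one delicate point is the $\epsilon$-uniform control of $\|u^0(s)\|_{L^p(\R^n)}$ and $\|u^\epsilon(s)\|_{L^p(\R^n)}$ on the initial layer $[\tau,\tau+T_2]$, where Lemma \ref{lemma:Ebound} does not apply; the paper passes over this silently.

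However, the patch you propose for that layer does not work as stated. You cannot control $\|u^\epsilon(s)\|_{E_\epsilon}$ pointwise down to $s=\tau$ via \eqref{eq:limbound}: that Gronwall argument needs a finite initial value $\|u^\epsilon_\tau\|_{E_\epsilon}$, whereas the lemma only assumes $u^\epsilon_\tau \in L^2(\R^n)$, and for such data the $E_\epsilon$-norm of the solution is in general unbounded as $s\to\tau^+$ (this is exactly why Lemma \ref{lemma:Ebound} is proved with the uniform Gronwall lemma and only for $t\geq T_2+\tau$). Similarly, the bound in the proof of Lemma \ref{lemma:l2bound} is not valid at $s=\tau$, since the term $\left[\frac{\gamma}{2}(p-2)(s-\tau)\right]^{-\frac{2}{p-2}}$ blows up there; what does hold from $s=\tau$ is a plain Gronwall bound on the same differential inequality, in terms of $\|u^\epsilon_\tau\|_{L^2(\R^n)}$, which is bounded by hypothesis. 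The correct repair is to note that in the inequality for $w^\epsilon$ the troublesome term enters only through its time integral: integrating \eqref{eq:intbypart} over $[\tau,t]$ (as in the first display of the proof of Lemma \ref{lemma:Ebound}, with left endpoint $\tau$) gives a bound on $\int_\tau^{t}\|u^\epsilon(s)\|_{E_\epsilon}^p\,ds$, uniform in $\epsilon$, in terms of $\|u^\epsilon_\tau\|_{L^2(\R^n)}$; since $a_\epsilon\geq 1$ gives $\|u\|_{L^p(\R^n)}\leq\|u\|_{E_\epsilon}$, H\"older's inequality in time then controls $\int_\tau^t\bigl(\|u^0\|_{L^p(\R^n)}^p+\|u^0\|_{L^p(\R^n)}^{p-1}\|u^\epsilon\|_{L^p(\R^n)}\bigr)ds$, and the Gronwall step producing \eqref{eq:continuity} goes through unchanged, with the factor multiplying $\|a_\epsilon-a_0\|_{L^\infty(\R^n)}$ depending only on $t-\tau$ and the uniform bounds. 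With that modification your argument is complete.
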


\begin{corollary}
The family of pullback attractors $\{{\mathcal A_\epsilon}(t):t\in \mathbb{R}\}$ is upper-semicontinuous in $\epsilon = 0$.
\end{corollary}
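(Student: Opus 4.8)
The plan is to deduce the upper-semicontinuity of the family $\{\mathcal{A}_\epsilon(t):t\in\mathbb{R}\}$ at $\epsilon=0$ from the general abstract principle for attractors: if the limit process is continuous with respect to the parameter (in the sense of the Lemma just proved) and the attractors $\mathcal{A}_\epsilon(t)$ are uniformly bounded (for $\epsilon$ small) in a set that is pullback attracted by $\mathcal{A}_0(t)$, then upper-semicontinuity follows. Concretely, fix $t\in\mathbb{R}$ and suppose, for contradiction, that $\operatorname{dist}_H(\mathcal{A}_\epsilon(t),\mathcal{A}_0(t))\not\to 0$ as $\epsilon\to 0$. Then there exist $\delta>0$, a sequence $\epsilon_k\to 0$, and points $x_k\in\mathcal{A}_{\epsilon_k}(t)$ with $\operatorname{dist}(x_k,\mathcal{A}_0(t))\geq\delta$ for all $k$.

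The first key step is to obtain a \emph{uniform} bound: by Lemma \ref{lemma:Ebound}, for each $\tau\leq t-T_2$ and each $\epsilon\in[0,\epsilon_0]$ the set $K_\epsilon(t)=\overline{B^{E_\epsilon}(0,\beta_2(t))}^{L^2(\R^n)}$ contains $\mathcal{A}_\epsilon(t)$ (since $\mathcal{A}_\epsilon(t)$ is pullback attracted by $K_\epsilon(t)$ and is invariant, hence contained in $K_\epsilon(t)$), and the radius $\beta_2(t)$ does not depend on $\epsilon$. Since $E_\epsilon\hookrightarrow E_0$ with constant independent of $\epsilon$ (because $a_\epsilon\geq 1$ gives $\|u\|_{E_0}\le\|u\|_{E_\epsilon}$ — more precisely one controls $\int a_0|u|^p$ by $\int a_\epsilon|u|^p$ up to the small perturbation, or simply uses the uniform embedding into $W^{1,p}$ from Lemma \ref{lemma:E}), the set $\bigcup_{\epsilon\in[0,\epsilon_0]}\mathcal{A}_\epsilon(t)$ is bounded in $L^2(\R^n)$, and in fact precompact in $L^2(\R^n)$ by the uniform compact embedding in Lemma \ref{lemma:E}. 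Hence, passing to a subsequence, $x_k\to x$ in $L^2(\R^n)$ for some $x$ with $\operatorname{dist}(x,\mathcal{A}_0(t))\geq\delta$.

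The second key step uses invariance and the continuity Lemma. For each $k$ and each $s\geq 0$, by invariance of $\mathcal{A}_{\epsilon_k}$ there is $y_k^s\in\mathcal{A}_{\epsilon_k}(t-s)$ with $S_{\epsilon_k}(t,t-s)y_k^s=x_k$. By the uniform boundedness and precompactness just established (applied at time $t-s$), we may assume $y_k^s\to y^s$ in $L^2(\R^n)$ along a further subsequence (diagonalizing over a countable dense set of times $s$). The Lemma then gives $S_{\epsilon_k}(t,t-s)y_k^s\to S_0(t,t-s)y^s$, so $x=S_0(t,t-s)y^s$. Thus $x$ lies on a bounded global solution of the limiting problem defined on $(-\infty,t]$: for every $s\ge 0$ there is $y^s$ in the (bounded) set $\overline{\bigcup_\epsilon\mathcal{A}_\epsilon(t-s)}$ with $S_0(t,t-s)y^s=x$. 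Now let $\tau\to-\infty$: since $\mathcal{A}_0(t)$ pullback attracts the bounded set $\mathcal{B}:=\overline{\bigcup_{\epsilon\in[0,\epsilon_0]}\mathcal{A}_\epsilon(t-s)}$ — here one should bound these sets uniformly in $s$ as well, which follows from $\beta_2$ being nondecreasing so that $\beta_2(t-s)\le\beta_2(t)$ for $s\ge 0$ — we get $\operatorname{dist}(S_0(t,t-s)y^s,\mathcal{A}_0(t))\to 0$ as $s\to\infty$, i.e. $\operatorname{dist}(x,\mathcal{A}_0(t))=0$, contradicting $\operatorname{dist}(x,\mathcal{A}_0(t))\geq\delta$. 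This contradiction proves the claim.

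The main obstacle is the bookkeeping with the parameter-dependent spaces $E_\epsilon$ and the verification that the attractors $\mathcal{A}_\epsilon(t)$ lie in a single $\epsilon$-independent precompact subset of $L^2(\R^n)$ \emph{uniformly for $t$ in the relevant range} — this is exactly what makes the subsequence extractions above legitimate, and it rests entirely on the uniformity (in $\epsilon$) of the constants in Lemma \ref{lemma:E} and Lemma \ref{lemma:Ebound} together with the monotonicity of $\beta_2$. The continuity input (the Lemma) is the other essential ingredient, but it is already in hand; once the uniform precompactness is set up, the argument is the standard contradiction/compactness scheme for upper-semicontinuity of pullback attractors.
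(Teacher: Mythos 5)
Your argument is correct in substance, but it follows a different route from the paper. The paper's proof is the short, direct triangle-inequality scheme: fix $t$, choose $\tau$ far enough in the past that $\operatorname{dist}_H(S_0(t,\tau)B,\mathcal{A}_0(t))<\delta/2$, where $B$ is a bounded set containing the sections $\mathcal{A}_\epsilon(\tau)$ uniformly in $\epsilon$ (this uses Lemma \ref{lemma:Ebound}); then write $\mathcal{A}_\epsilon(t)=S_\epsilon(t,\tau)\mathcal{A}_\epsilon(\tau)$ by invariance and use the estimate \eqref{eq:continuity} with $u^\epsilon_\tau=u^0_\tau=\xi_\epsilon\in\mathcal{A}_\epsilon(\tau)$, which is uniform over $\xi_\epsilon\in B$ since it depends only on $\|u^\epsilon_\tau-u^0_\tau\|_{L^2(\R^n)}$ and $\|a_\epsilon-a_0\|_{L^\infty(\R^n)}$, to get $\operatorname{dist}_H(\mathcal{A}_\epsilon(t),\mathcal{A}_0(t))<\delta$. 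You instead run the contradiction/compactness scheme: uniform containment $\mathcal{A}_\epsilon(t)\subset K_\epsilon(t)$, uniform precompactness of $\bigcup_\epsilon\mathcal{A}_\epsilon(t-s)$ in $L^2(\R^n)$, a diagonal subsequence, the continuity Lemma to identify the limit $x=S_0(t,t-s)y^s$, and finally pullback attraction of the fixed bounded set $\mathcal{B}$. Both arguments rest on the same two inputs (the $\epsilon$-uniform bound from Lemma \ref{lemma:Ebound} and the convergence of processes), but the paper's version needs only \emph{boundedness} of the sections at the single time $\tau$, whereas yours invokes the strictly stronger uniform \emph{compactness}; since \eqref{eq:continuity} is already uniform over bounded sets of initial data, the subsequence machinery is extra work rather than a necessity, though it has the merit of making explicit the containment $\mathcal{A}_\epsilon(t)\subset K_\epsilon(t)$ that the paper uses tacitly.

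Two small corrections to your write-up. First, $a_\epsilon\geq 1$ does \emph{not} give $\|u\|_{E_0}\leq\|u\|_{E_\epsilon}$; what saves you is exactly your parenthetical fallback, namely $a_0\leq a_\epsilon+\|a_\epsilon-a_0\|_{L^\infty(\R^n)}\leq (1+\|a_\epsilon-a_0\|_{L^\infty(\R^n)})\,a_\epsilon$, so that an $E_\epsilon$-bound gives an $E_0$-bound with a constant uniform for $\epsilon\in[0,\epsilon_0]$. Second, the alternative you mention --- using only the uniform embedding into $W^{1,p}(\R^n)$ --- would not suffice for the precompactness step, since $W^{1,p}(\R^n)$ is not compactly embedded in $L^2(\R^n)$ on the unbounded domain; the weight $a_\epsilon$ (equivalently, the $E_0$-bound just described together with Lemma \ref{lemma:E}) is what yields compactness. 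With these points fixed, your proof goes through.
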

\begin{proof}
Given $\delta >0$ let $\tau \in \mathbb{R}$ be such that
$\operatorname{dist}(S_0(t,\tau)B,\mathcal{A}_0(t)) < \frac{\delta}{2}$,
where $ B \supset \bigcup_{s \in \mathbb{R}}\mathcal{A}_\epsilon(s)$
is a bounded set (whose existence is
guaranteed by Lemma \ref {lemma:Ebound}).

Now for \eqref{eq:continuity}, there exists $\epsilon_0 >0$ such that
\[
 \sup_{\xi_\epsilon \in \mathcal{A}_\epsilon(t)}
\|S_\epsilon(t,\tau)\xi_\epsilon - S_0(t,\tau)\xi_\epsilon \|
<  \frac{\delta}{2},
\]
for all $\epsilon < \epsilon_0$. Then
\begin{align*}
\operatorname{dist}(\mathcal{A}_\epsilon(t),\mathcal{A}_0(t)) 
& \leqslant \operatorname{dist}(S_\epsilon(t,\tau)
 \mathcal{A}_\epsilon(\tau),S_0(t,\tau)\mathcal{A}_\epsilon(\tau))
 +\operatorname{dist}(S_0(t,\tau) \mathcal{A}_\epsilon(\tau),
 S_0(t,\tau)\mathcal{A}_0(\tau)) \\
&  = \sup_{\xi_\epsilon \in \mathcal{A}_{\epsilon}(\tau)}
 \operatorname{dist}(S_\epsilon(t,\tau)\xi_\epsilon,
 S_0(t,\tau)\xi_\epsilon)
 + \operatorname{dist}(S_0(t,\tau) \mathcal{A}_\epsilon(t),
 \mathcal{A}_0(t)) <  \frac{\delta}{2} +  \frac{\delta}{2},
  \end{align*}
which proves the upper-semicontinuity of the family of attractors.
\end{proof}

\subsection*{Acknowledgments}
The second author was partially supported by FAPESP 2008/53094-4. The third author was partially supported by FAPESP 2008/53094-4 and PROPe\textbackslash UNESP, Brazil.


\end{document}